\documentclass[11pt]{article}
\usepackage[T1]{fontenc}
\usepackage[utf8]{inputenc}
\usepackage{lmodern}
\usepackage{amsmath,amssymb,amsthm,mathtools}
\usepackage{enumitem}
\usepackage{microtype}
\usepackage[a4paper,margin=30mm]{geometry}
\usepackage[hidelinks]{hyperref}

\newtheorem{theorem}{Theorem}[section]
\newtheorem{lemma}[theorem]{Lemma}
\newtheorem{proposition}[theorem]{Proposition}
\newtheorem{corollary}[theorem]{Corollary}
\theoremstyle{definition}
\newtheorem{definition}[theorem]{Definition}
\newtheorem{example}[theorem]{Example}
\theoremstyle{remark}
\newtheorem{remark}[theorem]{Remark}

\newcommand{\R}{\mathbb{R}}
\newcommand{\Rp}{\mathbb{R}_{+}}
\newcommand{\ip}[2]{\langle #1,#2\rangle}
\newcommand{\norm}[1]{\lVert #1\rVert}

\newcommand{\angdist}{\operatorname{ad}}

\title{Characterizations of subdual cones via isotonicity of the norm of the metric projection and via antitonicity of angular distance}

\author{S. Z. N\'emeth\thanks{School of Mathematics, University of Birmingham, Watson Building, Edgbaston, Birmingham B15 2TT, United Kingdom. Email: \texttt{s.nemeth@bham.ac.uk}.}}

\begin{document}
\maketitle

\begin{abstract}
Let $H$ be a real Hilbert space, let $K\subseteq H$ be a closed convex cone, let $K^*$ be its dual closed convex cone, and let $P_K$ denote the
metric projection onto $K$.  We study scalar quantities associated with the projection $P_K$ and their interaction with the preorder induced by
$K$.  Our main result characterizes subduality by the monotonicity of the norm of a projection: $K\subseteq K^*$ if and only if $x\le_K y$ implies
$\norm{P_Kx}\le \norm{P_Ky}$.  We also discuss the sublinearity of the norm of a projection
and of the distance from a closed convex cone, relate these
functions to asymmetric seminorms, and introduce a normalized angular distance from a closed convex cone.  The angular distance admits equivalent
formulas in terms of $P_K$ and the ordinary distance function and, for closed convex cones with $K\ne H$, yields a second characterization of subduality.  Finally, coordinatewise monotone norms on finite-dimensional spaces are used to aggregate distances from several closed convex cones and thereby construct asymmetric seminorms.  The proofs are consequences of Moreau's decomposition theorem and the geometry of metric projections.
\end{abstract}

\noindent\textbf{Keywords:} closed onvex cone, pointed closed convex cone, subdual cone, metric projection, isotone mapping, isotone function, support
function, asymmetric norm, distance from a cone, angular distance from a cone.

\section{Introduction}

Metric projections onto convex pointed closed convex cones interact in a particularly rich way with the order structure induced by the pointed closed convex cone.  If $K$ is a pointed closed convex in a Hilbert space, then the relation
\[
 x\le_K y \quad\Longleftrightarrow\quad y-x\in K
\]
defines a partial order.  The requirement that the metric projection $P_K$ itself preserve this order,
\[
 x\le_K y \quad\Longrightarrow\quad P_Kx\le_K P_Ky,
\]
is strong and leads to the theory of isotone projection cones.  This theory was developed by Isac and A. B. N\'emeth in connection with complementarity problems; see, for example, \cite{IsacNemeth1986,IsacNemeth1990a,IsacNemeth1990b,IsacNemeth1990c,IsacNemeth1992}.  Related characterizations of latticial and subdual latticial pointed closed convex cones in Hilbert spaces were subsequently obtained in \cite{Nemeth2010a,Nemeth2010b}.

Full $K$-isotonicity is restrictive, so we ask what can already be inferred from the simpler function $x\mapsto\norm{P_Kx}$.  More precisely, we ask when
\[
 x\le_K y \quad\Longrightarrow\quad \norm{P_Kx}\le \norm{P_Ky}.
\]
One of the main observations of the paper is that this apparently much weaker requirement still has an exact geometric meaning: it is equivalent to the subduality condition $K\subseteq K^*$.  In other words, subduality is precisely the condition under which moving in a direction of $K$ cannot decrease the norm of the projection onto $K$.

There is also a convex-analytic interpretation of the function $x\mapsto\norm{P_Kx}$.  In finite-dimensional Euclidean space, Schneider records the identity
\[
 \norm{P_Kx}=h_{K\cap B}(x),
\]
where $h_{K\cap B}$ is the support function of the intersection of the pointed closed convex cone $K$ with the closed unit ball; see \cite[equation~(1.9)]{Schneider2022}.  The same identity holds in a Hilbert space by the same elementary projection argument.  Consequently, positive homogeneity and subadditivity of $\norm{P_K}$ are consistent with the general theory of support functions; see also \cite{Rockafellar1970,BauschkeCombettes2017}.  We nevertheless retain direct proofs based on Moreau's decomposition theorem, since they keep the order and projection geometry explicit.

The distance to $K$ is linked with the norm of the projection by Moreau's decomposition.  
Indeed,
\[
 d(x,K)=\norm{P_{K^\circ}x},
\]
where throughout this paper $K^\circ=-K^*$ denotes the polar closed convex cone.  Thus the norm of the projection and the distance are the norms of the two orthogonal components in the same decomposition, and it is useful to consider them together.  Distance functions associated with pointed closed convex cones are closely related, up to a sign convention for the ordering cone, to the canonical half-norms of ordered Banach spaces studied by Robinson and Yamamuro \cite{RobinsonYamamuro1983}; they also fit into the modern theory of asymmetric normed spaces \cite{Cobzas2013}.  We include several elementary null-set characterizations because they arise directly from these projection identities and will be used later.

A further aim is to distinguish distance from direction.  Since a cone is unchanged under multiplication by a positive scalar, the direction of a vector relative to the cone is often more informative than its ordinary distance alone.  We therefore introduce a normalized angular distance taking values in $[0,1]$.  Moreau's orthogonal decomposition expresses this quantity in terms of $\norm{P_Kx}$ and $d(x,K)$, and the characterization of subduality by the norm of the projection then has a corresponding formulation in terms of angular distance.

Finally, when several cones are considered at the same time, one may ask whether the distances from them can be combined into a single function while preserving the properties established for an individual cone.  We show that a coordinatewise monotone norm on the nonnegative orthant provides a simple way to do this.  If the cones have trivial intersection, the resulting function is an asymmetric seminorm, and a suitable symmetry of the family of cones yields an ordinary norm.

\section{Preliminaries}

Let $(H,\ip{\cdot}{\cdot})$ be a real Hilbert space and let $\norm{x}=\sqrt{\ip{x}{x}}$.  The Hilbert space setting is especially convenient here because metric projections and orthogonal decompositions are available.

We first recall the cone terminology used throughout the paper.  A nonempty closed set $K\subseteq H$ is called a \emph{closed convex cone} if
\[
 \lambda x+\mu y\in K
 \quad \mbox{whenever} \quad \mbox x,y\in K;\;\lambda,\mu\in\Rp:=\{x\in\mathbb R:x\ge 0\}.
\]
A closed convex cone $K$ induces a relation $\le_K$ on $H$ by
\[
 x\le_K y
 \quad\Longleftrightarrow\quad
 y-x\in K.
\]
Recall that a relation $\le$ is called a \emph{preorder} if it is reflexive and
transitive, and a \emph{partial order} if, in addition, it is antisymmetric.

The relation $\le_K$ is a preorder. Indeed, since $0\in K$, we have
$x\le_K x$ for every $x\in H$. Moreover, if $x\le_K y$ and $y\le_K z$, then
\[
 z-x=(z-y)+(y-x)\in K,
\]
and hence $x\le_K z$.

The preorder $\le_K$ is compatible with the vector space structure in the
following sense:
\[
 x\le_K y
 \quad\Longrightarrow\quad
 x+z\le_K y+z.
 \qquad \forall z\in H,
\]
and
\[
 x\le_K y
 \quad\Longrightarrow\quad
 \lambda x\le_K \lambda y,
 \qquad \forall\lambda\in\Rp.
\]

A closed convex cone $K$ is called \emph{pointed} if
\[
 K\cap(-K)=\{0\}.
\]
The preorder $\le_K$ is a partial order if and only if $K$ is pointed.
Indeed, if $x\le_K y$ and $y\le_K x$, then
\[
 y-x\in K\cap(-K),
\]
so pointedness implies $x=y$. Conversely, if $\le_K$ is antisymmetric and
$x\in K\cap(-K)$, then $0\le_K x$ and $x\le_K0$, and therefore $x=0$.

Both the dual and the polar cone will be used below. The dual cone is central to the notion of subduality, whereas the polar cone arises naturally in Moreau's decomposition. They are, respectively,
\[
 K^*=\{x\in H:\ip{x}{y}\ge0\ \text{for every }y\in K\},
 \qquad
 K^\circ=-K^*.
\]
The relation between $K$ and $K^*$ is central to the main result.  The closed convex cone $K$ is \emph{subdual} if $K\subseteq K^*$. It can be simply verified that a subdual closed convex cone is also pointed.

We shall use both the nearest point in the cone and the distance to it.  For $x\in H$, the distance from $x$ to $K$ is
\[
 d(x,K)=\min_{y\in K}\norm{x-y}.
\]
Since $K$ is closed and convex, there is a unique point $P_Kx\in K$ satisfying
\[
 \norm{x-P_Kx}=d(x,K).
\]

The basic link between projection onto a cone and projection onto its polar is provided by Moreau's decomposition theorem.  We shall use it repeatedly; see \cite{Moreau1962} and also \cite{BauschkeCombettes2017}.

\begin{theorem}[Moreau's decomposition theorem]
Let $K\subseteq H$ be a closed convex cone.  For $z,x,y\in H$, the following statement are equivalent:
\begin{enumerate}[label=\textup{(\roman*)}]
\item $z=x+y$, $x\in K$, $y\in K^circ$, and $\ip{x}{y}=0$;
\item $x=P_Kz$ and $y=P_{K^\circ}z$.
\end{enumerate}
In particular,
\[
 x=P_Kx+P_{K^\circ}x,
 \qquad
 \ip{P_Kx}{P_{K^\circ}x}=0.
\]
\end{theorem}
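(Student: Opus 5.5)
We prove Moreau's decomposition theorem from the variational characterization of the metric projection onto a closed convex set. For closed convex $C\subseteq H$ and $z\in H$, the point $p\in C$ equals $P_Cz$ if and only if $\ip{z-p}{c-p}\le 0$ for all $c\in C$. This follows by expanding $\norm{z-(p+t(c-p))}^2$ for $t\in[0,1]$ and letting $t\downarrow 0$; the converse comes from $\norm{z-c}^2=\norm{z-p}^2+\norm{c-p}^2-2\ip{z-p}{c-p}$. The first step is to specialize this to a closed convex cone $K$. Taking $c=0$ and $c=2p$ gives $\ip{z-p}{p}=0$. Substituting this back, the inequality becomes $\ip{z-p}{c}\le 0$ for all $c\in K$, that is, $z-p\in K^\circ$. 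So
\[
 p=P_Kz \iff p\in K,\ z-p\in K^\circ,\ \ip{z-p}{p}=0 .
\]

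For (ii)$\Rightarrow$(i), set $x=P_Kz$ and $y:=z-x$. By the characterization, $y\in K^\circ$ and $\ip{x}{y}=0$. It remains to show that $y=P_{K^\circ}z$. Here we apply the same characterization to the closed convex cone $K^\circ$. We need $y\in K^\circ$, $\ip{z-y}{y}=\ip{x}{y}=0$, and $z-y=x\in (K^\circ)^\circ$. The last condition holds because every $x\in K$ satisfies $\ip{x}{w}\le 0$ for all $w\in K^\circ$ by the definition of $K^\circ=-K^*$; only the inclusion $K\subseteq K^{\circ\circ}$ is needed, not the bipolar theorem. Hence $y=P_{K^\circ}z$, and the pair $(x,y)$ satisfies (i).

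For (i)$\Rightarrow$(ii), let $z=x+y$ with $x\in K$, $y\in K^\circ$ and $\ip{x}{y}=0$. Then $z-x=y\in K^\circ$ and $\ip{z-x}{x}=0$, so the characterization gives $x=P_Kz$. By the symmetric argument with $K^\circ$ in place of $K$, again using $x\in K\subseteq K^{\circ\circ}$, we get $y=P_{K^\circ}z$. The displayed identities at the end of the theorem follow by applying (ii)$\Rightarrow$(i) to an arbitrary $z$, renamed $x$.

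The only step with real content is the variational characterization of $P_C$ and its reduction to the cone conditions. The one subtle point is that the argument for $K^\circ$ must use only the trivial inclusion $K\subseteq (K^\circ)^\circ$, so that no appeal to the bipolar theorem is needed.
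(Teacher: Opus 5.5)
Your proof is correct, but the paper gives no proof of this theorem to compare against: it cites Moreau's note and Bauschke--Combettes, and only asserts that no separation theorem is needed.

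Your argument is the standard self-contained one. You use the variational inequality $\ip{z-p}{c-p}\le 0$ to characterize $P_Cz$, specialize it to cones via $c=0$ and $c=2p$, and then apply it to both $K$ and $K^\circ$. It does exactly what the paper's remark requires: the only relation between $K$ and its polar that you use is the trivial inclusion $K\subseteq K^{\circ\circ}$.

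This matters for the paper's later claim that the bipolarity equation $K^{\circ\circ}=K$ follows from applying Moreau's theorem twice. With your proof that derivation is not circular. For $z\in K^{\circ\circ}$, your cone characterization applied to $K^\circ$ (with $p=0$) gives $P_{K^\circ}z=0$, hence $z=P_Kz\in K$.

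The only outside ingredient is the existence of $P_Kz$. The paper takes this from its preliminaries, and it follows from completeness and the parallelogram law, again without separation. You do not need existence of $P_{K^\circ}z$ separately, since the converse half of your variational characterization already shows that $z-P_Kz$ is the unique nearest point of $K^\circ$.
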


Denote $K^{\circ\circ}=(K^\circ)^\circ$. Note that if $K$ is a closed convex cone, then
\begin{equation}\label{bipolarity}
	K^{\circ\circ}=K.
\end{equation}
This identity is usually proved using a separation theorem, but it also follows by applying Moreau's decomposition theorem
twice: first to the cone $K$, and then to its polar cone $K^\circ$. Note that the proof of Moreau's decomposition does not 
need any separation theorem. We shall refer to the identity \eqref{bipolarity} as bipolarity equation. 

The monotonicity properties considered below are taken with respect to the preorder induced by the closed convex cone $K$.  A mapping $F:H\to H$
is called $K$-\emph{isotone} if $x\le_Ky$ implies $F(x)\le_KF(y)$.  A function $f:H\to\R$ is called $K$-\emph{isotone} if $x\le_Ky$ implies $f(x)\le f(y)$, and $K$-\emph{antitone} if $x\le_Ky$ implies $f(y)\le f(x)$.

Distance from a cone is generally not symmetric under $x\mapsto -x$.  This is why asymmetric seminorms provide an appropriate framework for some
of the functions considered below.  We shall use the terminology of the original formulation. Let any $\lambda\in\R_+$ and any $x,y\in H$.  A
function $q:H\to\R$ is called sublinear if it is positively homogeneous, that is,
\[
	q(\lambda x)=\lambda q(x)
\]
and subadditive, that is,
\[
	q(x+y)\le q(x)+q(y).
\]
A sublinear function $q:H\to\Rp$ is called an \emph{asymmetric seminorm} if
\[
 q(x)=q(-x)=0\quad\Longrightarrow\quad x=0.
\]
Terminology varies in the literature.  A functional satisfying precisely this definiteness
condition is commonly called an \emph{asymmetric norm} in modern treatments such as 
\cite{Cobzas2013}; closely related functionals are called half-norms in the ordered Banach-space literature \cite{RobinsonYamamuro1983}.  We retain the term ``asymmetric seminorm'' used
here. 

An asymmetric seminorm \(q:H\to\mathbb{R}_+\) is called a \emph{norm} if and only if it
is symmetric, that is, $q(x)=q(-x)$ for every $x\in H$.

\section{The norm of the metric projection}

We begin with a monotonicity property that holds for every closed convex cone.  The relevant order in this general case is the one induced by the dual cone.

\begin{lemma}\label{lem:dual-isotone}
Let $K\subseteq H$ be a closed convex cone.  Then the function
\[
 H\ni x\longmapsto \norm{P_Kx}
\]
is $K^*$-isotone.
\end{lemma}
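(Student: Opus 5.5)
We must show that if $y-x\in K^*$ then $\norm{P_Kx}\le\norm{P_Ky}$. The plan is to compare the two projections through the variational characterization supplied by Moreau's decomposition theorem, namely $y-P_Ky\in K^\circ$ together with $\ip{P_Ky}{y-P_Ky}=0$, and similarly for $x$.

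The key steps, in order, are as follows.

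\emph{Step 1: a bound for $\norm{P_Kx}^2$.} Write $u=P_Kx\in K$. By Moreau's decomposition theorem, $x=u+P_{K^\circ}x$ with $\ip{u}{P_{K^\circ}x}=0$. Hence
\[
 \norm{u}^2=\ip{u}{x}.
\]
Since $u\in K$ and $y-x\in K^*$, we have $\ip{u}{y-x}\ge0$. Therefore $\norm{u}^2\le\ip{u}{y}$.

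\emph{Step 2: bounding $\ip{u}{y}$.} Decompose $y=P_Ky+P_{K^\circ}y$. Since $u\in K$ and $P_{K^\circ}y\in K^\circ=-K^*$, we get $\ip{u}{P_{K^\circ}y}\le0$. Consequently
\[
 \ip{u}{y}\le\ip{u}{P_Ky}\le\norm{u}\,\norm{P_Ky}
\]
by Cauchy--Schwarz.

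\emph{Step 3: conclusion.} Combining the two steps gives $\norm{u}^2\le\norm{u}\,\norm{P_Ky}$. If $u=0$ the claim is trivial. Otherwise we divide by $\norm{u}$ and obtain $\norm{P_Kx}\le\norm{P_Ky}$.

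None of these steps should be a real obstacle. The only point needing care is the sign bookkeeping in Step 2: one must note that $K^\circ=-K^*$ makes $\ip{u}{P_{K^\circ}y}$ nonpositive rather than merely zero. Orthogonality holds only between $P_Ky$ and $P_{K^\circ}y$, not between $u$ and $P_{K^\circ}y$.

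An alternative route would use the support function identity $\norm{P_Kx}=\sup\{\ip{v}{x}:v\in K,\ \norm{v}\le1\}$, which is immediate from Steps 1 and 2. Monotonicity then follows because each $\ip{v}{\cdot}$ with $v\in K$ is $K^*$-isotone. I will present the direct computation.
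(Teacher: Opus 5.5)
Your proof is correct and is essentially the paper's argument. The paper merges your Steps 1 and 2 into the single relation $x\le_{K^*}y\le_{K^*}P_Ky$, using $-P_{K^\circ}y\in K^*$ and transitivity, and then pairs $P_Ky-P_Kx-P_{K^\circ}x\in K^*$ with $P_Kx$ to obtain $\norm{P_Kx}^2\le\ip{P_Kx}{P_Ky}\le\norm{P_Kx}\norm{P_Ky}$, which is exactly your chain of inequalities.
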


\begin{proof}
Let $x\le_{K^*}y$.  The assertion $\|P_Kx\|\le\|P_Ky\|$ is immediate if $P_Kx=0$, so suppose that $P_Kx\ne0$.  By Moreau's decomposition theorem,
\[
 P_Ky-y=-P_{K^\circ}y\in K^*,
\]
and hence $y\le_{K^*}P_Ky$.  Therefore $x\le_{K^*}P_Ky$.  Using again
\[
 x=P_Kx+P_{K^\circ}x,
\]
we obtain
\[
 P_Ky-P_Kx-P_{K^\circ}x\in K^*.
\]
Taking the inner product with $P_Kx\in K$, and using Moreau orthogonality, yields
\[
 \norm{P_Kx}^2
 \le \ip{P_Kx}{P_Ky}
 \le \norm{P_Kx}\,\norm{P_Ky}.
\]
Since $P_Kx\ne0$, division by $\norm{P_Kx}$ gives
\[
 \norm{P_Kx}\le\norm{P_Ky}.
\]
\end{proof}

Lemma~\ref{lem:dual-isotone} shows that the norm of the projection is $K^*$-isotone for every closed convex cone $K$. The natural
question is therefore to determine for which cones it is also $K$-isotone. The following theorem gives an exact answer:
this happens precisely when $K$ is subdual.

\begin{theorem}\label{thm:subdual-projection-norm}
Let $K\subseteq H$ be a pointed closed convex cone.  Then $K$ is subdual if and only if the function $x\mapsto\norm{P_Kx}$ is $K$-isotone.
\end{theorem}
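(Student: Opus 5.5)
The forward implication follows directly from Lemma~\ref{lem:dual-isotone}. If $K\subseteq K^*$, then $y-x\in K$ implies $y-x\in K^*$, so $x\le_K y$ gives $x\le_{K^*}y$. The lemma then yields $\norm{P_Kx}\le\norm{P_Ky}$. Hence $x\mapsto\norm{P_Kx}$ is $K$-isotone, and this direction needs nothing further.

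For the converse I argue by contraposition. Suppose $K$ is not subdual. Then there exist $u,v\in K$ with $\ip{u}{v}<0$; in particular $u,v\ne0$. I want $x\le_K y$ with $\norm{P_Kx}>\norm{P_Ky}$. The natural choice is $x=u$, so that $P_Kx=u$ and $\norm{P_Kx}=\norm{u}$, together with $y=u+tv$ for a small $t>0$. Then $y-x=tv\in K$, so $x\le_K y$. Since $u,v\in K$ and $K$ is a convex cone, $y\in K$ as well, hence $P_Ky=y$. Therefore
\[
\norm{P_Ky}^2=\norm{u}^2+2t\ip{u}{v}+t^2\norm{v}^2 .
\]
This is strictly less than $\norm{u}^2$ as soon as $0<t<-2\ip{u}{v}/\norm{v}^2$, which contradicts $K$-isotonicity.

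I expect no real obstacle. The only point to check is that both test points lie in $K$, so that the projections are the points themselves, and convexity of $K$ guarantees this. Pointedness is not actually used in either direction; it only reflects the setting in which $\le_K$ is a partial order.
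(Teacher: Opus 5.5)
Your proof is correct. The forward direction coincides with the paper's, but your converse takes a different route.

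The paper argues directly. For $x\in K$ it uses the single comparison $-x\le_K 0$ to get $\norm{P_K(-x)}\le\norm{P_K0}=0$. It then invokes Moreau's decomposition ($P_Kz=0$ iff $z\in K^\circ$) to conclude $-x\in K^\circ=-K^*$.

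You instead argue by contraposition and keep both test points inside $K$, where $P_K$ fixes every point. The failure of subduality then reduces to the elementary fact that $t\mapsto\norm{u+tv}^2$ has derivative $2\ip{u}{v}<0$ at $t=0$.

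Your version needs no projection theory beyond $P_Ky=y$ for $y\in K$. It also shows that $K$-isotonicity of the plain Hilbert norm, restricted to pairs in $K$, already forces $K\subseteq K^*$. The paper's version is shorter and avoids contradiction. It shows instead that isotonicity tested only against the origin (pairs $z\le_K 0$) suffices, since that condition is exactly $-K\subseteq K^\circ$.

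Your observation that pointedness is never used applies to the paper's proof as well. This is consistent with the fact that subdual cones are automatically pointed.
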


\begin{proof}
Suppose first that $K$ is subdual.  Then $K\subseteq K^*$, so
\[
 x\le_Ky\quad\Longrightarrow\quad x\le_{K^*}y.
\]
Lemma~\ref{lem:dual-isotone} therefore gives
\[
 \norm{P_Kx}\le\norm{P_Ky}.
\]

Conversely, suppose that $x\mapsto\norm{P_Kx}$ is $K$-isotone, and let $x\in K$.  Since
\[
 -x\le_K0,
\]
we have
\[
 0\le\norm{P_K(-x)}\le\norm{P_K0}=0.
\]
Thus $P_K(-x)=0$.  Moreau's decomposition theorem implies $-x\in K^\circ=-K^*$, and hence $x\in K^*$.  Therefore $K\subseteq K^*$.
\end{proof}

The norm of the projection also has a standard convex-analytic interpretation, which gives another viewpoint on the function appearing in Theorem~\ref{thm:subdual-projection-norm}.

\begin{remark}[Support-function interpretation]\label{rem:support}
Let $B=\{z\in H:\norm{z}\le1\}$.  For every closed convex cone $K$ and every $x\in H$,
\[
 \norm{P_Kx}
 =\sup_{z\in K\cap B}\ip{x}{z}.
\]
Indeed, Moreau's decomposition gives
\[
 \ip{x}{z}
 =\ip{P_Kx}{z}+\ip{P_{K^\circ}x}{z}
 \le\ip{P_Kx}{z}
 \le\norm{P_Kx}
\]
for $z\in K\cap B$, and equality is attained at $z=P_Kx/\norm{P_Kx}$ when $P_Kx\ne0$.  Thus $\norm{P_K\cdot}$ is the support function of $K\cap B$.  In finite dimensions this identity is recorded explicitly in \cite[Eq.~(1.9)]{Schneider2022}.  In particular, subadditivity of $\norm{P_K}$ is also an immediate consequence of the standard sublinearity of support functions \cite{Rockafellar1970}.
\end{remark}

The support-function representation already shows that $x\mapsto \|P_Kx\|$ is positively homogeneous and subadditive.  The
next result also gives a condition under which this function is an asymmetric seminorm.  We give a direct proof based on
Moreau's decomposition.

\begin{theorem}\label{thm:projection-sublinear}
Let $K\subseteq H$ be a closed convex cone and define $p(x)=\norm{P_Kx}$.  Then:
\begin{enumerate}[label=\textup{(\roman*)}]
\item $p$ is sublinear;
\item if $K^\circ$ is a pointed closed convex cone, then $p$ is an asymmetric seminorm.
\end{enumerate}
\end{theorem}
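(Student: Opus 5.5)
For (i) I would argue directly from Moreau's decomposition, without appealing to Remark~\ref{rem:support}.

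\emph{Positive homogeneity.} For $\lambda>0$, write $\lambda x=\lambda P_Kx+\lambda P_{K^\circ}x$. Here $\lambda P_Kx\in K$, $\lambda P_{K^\circ}x\in K^\circ$, and the two summands are orthogonal. The uniqueness part of Moreau's theorem then gives $P_K(\lambda x)=\lambda P_Kx$, so $p(\lambda x)=\lambda p(x)$. The case $\lambda=0$ is trivial because $P_K0=0$.

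\emph{Subadditivity.} This is the key step, and I would reproduce the variational inequality from Lemma~\ref{lem:dual-isotone}. For any $z\in K$, Moreau's decomposition gives $\ip{x}{z}=\ip{P_Kx}{z}+\ip{P_{K^\circ}x}{z}\le \ip{P_Kx}{z}\le p(x)\norm{z}$, since $\ip{P_{K^\circ}x}{z}\le 0$. Set $w=P_K(x+y)$. If $w=0$ there is nothing to prove. Otherwise, Moreau orthogonality gives
\[
\norm{w}^2=\ip{x+y}{w}=\ip{x}{w}+\ip{y}{w}\le (p(x)+p(y))\norm{w}.
\]
Dividing by $\norm{w}$ yields $p(x+y)\le p(x)+p(y)$. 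I expect this to be the only step that needs care, and it reduces to the inequality $\ip{x}{z}\le p(x)\norm{z}$ for $z\in K$.

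For (ii), nonnegativity of $p$ is clear. Suppose $p(x)=p(-x)=0$. Then $P_Kx=0$ and $P_K(-x)=0$, so by Moreau's decomposition $x=P_{K^\circ}x\in K^\circ$ and $-x\in K^\circ$. Hence $x\in K^\circ\cap(-K^\circ)=\{0\}$, using the assumed pointedness of $K^\circ$. Therefore $x=0$, and $p$ is an asymmetric seminorm.
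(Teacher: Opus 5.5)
Your proof is correct and is essentially the paper's argument. The paper gets subadditivity from $x+y\le_{K^*}P_Kx+P_Ky$ together with Lemma~\ref{lem:dual-isotone}; unwinding that lemma gives the same chain $\norm{w}^2=\ip{x+y}{w}\le\ip{P_Kx+P_Ky}{w}\le(p(x)+p(y))\norm{w}$ with $w=P_K(x+y)$, and your handling of positive homogeneity and of part (ii) matches the paper's.
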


\begin{proof}
For $\lambda\ge0$, positive homogeneity of the projection onto a closed convex cone gives
\[
 P_K(\lambda x)=\lambda P_Kx,
\]
so $p(\lambda x)=\lambda p(x)$.

Let $x,y\in H$.  Moreau's decomposition theorem gives
\[
 x+y=P_Kx+P_Ky+P_{K^\circ}x+P_{K^\circ}y,
\]
and therefore
\[
 x+y\le_{K^*}P_Kx+P_Ky.
\]
Lemma~\ref{lem:dual-isotone} yields
\begin{align*}
 \norm{P_K(x+y)}
 &\le \norm{P_K(P_Kx+P_Ky)}\\
 &=\norm{P_Kx+P_Ky}\\
 &\le\norm{P_Kx}+\norm{P_Ky}.
\end{align*}
This proves subadditivity.

For (ii), suppose that $p(x)=p(-x)=0$.  Then $P_Kx=P_K(-x)=0$, and Moreau's decomposition theorem gives
\[
 x\in K^\circ,
 \qquad
 -x\in K^\circ.
\]
If $K^\circ$ is pointed, this implies $x=0$.
\end{proof}

\section{Distance functions and asymmetric seminorms}

The distance from a closed convex cone is the norm of the complementary Moreau decomposition component, so the preceding results for norms of projections apply immediately to distance functions:
\begin{equation}\label{eq:distance-polar-projection}
 d(x,K)=\norm{x-P_Kx}=\norm{P_{K^\circ}x}.
\end{equation}
This identity makes the next statement immediate from Theorem~\ref{thm:projection-sublinear}.

\begin{corollary}\label{cor:distance-sublinear}
	Let $K\subseteq H$ be a closed convex cone.  Then $d(\cdot,K)$ is sublinear.
\end{corollary}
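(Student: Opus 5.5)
The plan is to reduce the statement to Theorem~\ref{thm:projection-sublinear}(i) applied to the polar cone, using the identity \eqref{eq:distance-polar-projection}. First I will note that $K^\circ=-K^*$ is itself a closed convex cone. It is an intersection of closed half-spaces $\{x:\ip{x}{y}\le0\}$, $y\in K$, and it contains $0$. Hence Theorem~\ref{thm:projection-sublinear}(i) applies with $K^\circ$ in place of $K$: the function $x\mapsto\norm{P_{K^\circ}x}$ is sublinear on $H$.

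Next I will justify \eqref{eq:distance-polar-projection}. By definition $d(x,K)=\norm{x-P_Kx}$. By Moreau's decomposition theorem, $x-P_Kx=P_{K^\circ}x$, so $d(x,K)=\norm{P_{K^\circ}x}$ for every $x\in H$. Combining this with the previous paragraph shows that $d(\cdot,K)$ is positively homogeneous and subadditive, that is, sublinear.

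There is no genuine obstacle. The only point needing care is that Theorem~\ref{thm:projection-sublinear} is stated for an arbitrary closed convex cone, so no pointedness or bipolarity is needed for part~(i). In particular, \eqref{bipolarity} is not required: we only project onto $K^\circ$ and never onto $K^{\circ\circ}$.

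As a sanity check, one could also argue directly. Positive homogeneity follows from $P_K(\lambda x)=\lambda P_Kx$. For subadditivity, $P_Kx+P_Ky\in K$, so
\[
 d(x+y,K)\le\norm{x+y-P_Kx-P_Ky}\le d(x,K)+d(y,K).
\]
I would mention this only as a remark and keep the proof as the one-line reduction above.
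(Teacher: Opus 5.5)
Your proposal is correct and is exactly the paper's argument: the identity $d(x,K)=\norm{P_{K^\circ}x}$ from Moreau's decomposition, combined with Theorem~\ref{thm:projection-sublinear}(i) applied to the closed convex cone $K^\circ$. One small correction to your aside: invoking that theorem for $K^\circ$ does use $P_{K^{\circ\circ}}$ inside its proof, through Lemma~\ref{lem:dual-isotone}. Your conclusion still holds, because the theorem is valid for every closed convex cone, so bipolarity is never needed.
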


In the preceding corollary $K=\{x\in H:d(x,K)=0\}$ because $K$ is closed. Therefore, it is natural to put the converse 
question: Which sets can occur as the zero set of a continuous, positively homogeneous, subadditive function? 

\begin{theorem}\label{thm:closed convex cone-null}
Let $K\subseteq H$.  Then $K$ is a closed convex cone if and only if there exists a continuous sublinear function 
$q:H\to\Rp$ such that
\[
 K=\{x\in H:q(x)=0\}.
\]
\end{theorem}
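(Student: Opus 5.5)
The plan is to prove the two directions separately, each by a short direct argument built on the results already established.

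For the forward direction, suppose $K$ is a closed convex cone and take $q=d(\cdot,K)$. By Corollary~\ref{cor:distance-sublinear} this function is sublinear, and it clearly takes values in $\Rp$. It is continuous, indeed $1$-Lipschitz, since $|d(x,K)-d(y,K)|\le\norm{x-y}$. Alternatively, continuity follows from \eqref{eq:distance-polar-projection} and the nonexpansiveness of $P_{K^\circ}$. Its zero set is the closure of $K$, which is $K$ because $K$ is closed. This completes the forward direction.

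For the converse, let $q:H\to\Rp$ be continuous and sublinear, and put $K=\{x:q(x)=0\}$. I would verify the defining properties of a closed convex cone one at a time.
\begin{enumerate}[label=\textup{(\alph*)}]
\item Nonemptiness: positive homogeneity with $\lambda=0$ gives $q(0)=0\cdot q(0)=0$, so $0\in K$.
\item Closedness: $K=q^{-1}(\{0\})$ is the preimage of a closed set under a continuous map.
\item Closure under nonnegative combinations: for $x,y\in K$ and $\lambda,\mu\in\Rp$, subadditivity and positive homogeneity give
\[
 0\le q(\lambda x+\mu y)\le \lambda q(x)+\mu q(y)=0 .
\]
The lower bound $0\le q$ comes from the codomain of $q$ being $\Rp$. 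Hence $\lambda x+\mu y\in K$.
\end{enumerate}

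There is no real obstacle in either direction. The only subtle points are two. First, nonnegativity of $q$ is essential in step (c), because otherwise the zero set of a sublinear function need not be convex. Second, the hypothesis $\lambda=0$ is allowed in the definition of positive homogeneity, and this is what gives $q(0)=0$ in step (a).
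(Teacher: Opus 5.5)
Your proof is correct and follows essentially the same route as the paper: you take $q=d(\cdot,K)$ for the forward direction, and for the converse you get closedness from continuity and use the chain $0\le q(\lambda x+\mu y)\le\lambda q(x)+\mu q(y)=0$. Your explicit check that $q(0)=0$, which shows the zero set is nonempty as the paper's definition of a closed convex cone requires, is a small detail the paper's proof leaves implicit.
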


\begin{proof}
If $K$ is a closed convex cone, take $q=d(\cdot,K)$.  The function $q$ is continuous, its zero set is $K$, and
Corollary~\ref{cor:distance-sublinear} yields sublinearity. 

Conversely, suppose that $K$ is the zero set of a continuous sublinear function $q:H\to\Rp$.  Continuity of $q$ implies 
that $K$ is closed.  If $x,y\in K$ and $\lambda,\mu\ge0$, then
\[
 0\le q(\lambda x+\mu y)
 \le \lambda q(x)+\mu q(y)=0.
\]
Thus $\lambda x+\mu y\in K$, so $K$ is a closed convex cone.
\end{proof}

For a pointed cone, the corresponding additional condition is asymmetric definiteness: simultaneous vanishing at $x$ and $-x$ can occur only at the origin.

\begin{theorem}\label{thm:pointed closed convex cone-null}
Let $K\subseteq H$.  Then $K$ is a pointed closed convex cone if and only if there exists a continuous asymmetric seminorm $q:H\to\Rp$ such that
\[
 K=\{x\in H:q(x)=0\}.
\]
\end{theorem}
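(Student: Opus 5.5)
The proof follows the pattern of Theorem~\ref{thm:closed convex cone-null}, with one extra check in each direction for pointedness versus asymmetric definiteness.

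For the forward implication, let $K$ be a pointed closed convex cone and take $q=d(\cdot,K)$. By Theorem~\ref{thm:closed convex cone-null} and Corollary~\ref{cor:distance-sublinear}, $q$ is continuous, sublinear, nonnegative, and its zero set is $K$, since $K$ is closed. It remains to verify definiteness. Suppose $q(x)=q(-x)=0$. Then $x\in K$ and $-x\in K$, so $x\in K\cap(-K)=\{0\}$, and hence $x=0$. Thus $q$ is a continuous asymmetric seminorm with zero set $K$.

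For the converse, suppose $K=\{x:q(x)=0\}$ for a continuous asymmetric seminorm $q$. An asymmetric seminorm is in particular a continuous sublinear function $H\to\Rp$, so Theorem~\ref{thm:closed convex cone-null} already shows that $K$ is a closed convex cone. For pointedness, take $x\in K\cap(-K)$. Then $q(x)=0$, and since $-x\in K$ also $q(-x)=0$. The definiteness condition then forces $x=0$, so $K\cap(-K)=\{0\}$.

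There is no real obstacle here; the statement is essentially a corollary of Theorem~\ref{thm:closed convex cone-null}. The only point needing care is that the zero set of $d(\cdot,K)$ is exactly $K$, which uses closedness, and that the definiteness condition matches $K\cap(-K)=\{0\}$ exactly.
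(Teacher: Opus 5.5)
Your proof is correct and matches the paper's argument: take $q=d(\cdot,K)$ and derive definiteness from $K\cap(-K)=\{0\}$ in one direction. In the other, use the preceding zero-set characterization of closed convex cones and then read pointedness off the definiteness condition.
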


\begin{proof}
Suppose first that $K$ is a pointed closed convex cone and set $q=d(\cdot,K)$.  By Theorem~\ref{thm:closed convex cone-null}, $q$ is continuousand sublinear.  If $q(x)=q(-x)=0$, then $x,-x\in K$, hence
\[
 x\in K\cap(-K)=\{0\}.
\]
Thus $q$ is an asymmetric seminorm.

Conversely, suppose that $K$ is the zero set of a continuous asymmetric seminorm $q$.  Theorem~\ref{thm:closed convex cone-null} shows that $K$ is a closed convex cone.  If $x\in K\cap(-K)$, then
\[
 q(x)=q(-x)=0,
\]
so $x=0$.  Hence $K$ is pointed.
\end{proof}

The same zero-set argument is not specific to the metric projection.  It also applies to any continuous retraction whose displacement has the same
sublinearity property.

\begin{corollary}\label{cor:retraction}
Let $K\subseteq H$ and let $\rho:H\to K$ be a continuous retraction.  If the function
\[
 x\longmapsto\norm{x-\rho(x)}
\]
is sublinear, then $K$ is a closed convex cone.
\end{corollary}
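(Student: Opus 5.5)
The idea is to show that $K$ is exactly the zero set of the displacement function
\[
 q(x)=\norm{x-\rho(x)},
\]
and then apply the converse direction of Theorem~\ref{thm:closed convex cone-null}. By hypothesis $q$ is sublinear, and it is clearly nonnegative. It is also continuous, since $\rho$ and the norm are continuous. So $q:H\to\Rp$ is a continuous sublinear function, and only the zero-set identity remains to be checked.

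First, let $x\in K$. Because $\rho$ is a retraction onto $K$, we have $\rho(x)=x$, so $q(x)=0$. This gives $K\subseteq\{x\in H:q(x)=0\}$.

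Conversely, let $q(x)=0$. Then $x=\rho(x)$, and $\rho(x)\in K$ because $\rho$ maps $H$ into $K$. Hence $x\in K$, and therefore
\[
 K=\{x\in H:q(x)=0\}.
\]

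Theorem~\ref{thm:closed convex cone-null} now yields that $K$ is a closed convex cone. Closedness can also be seen directly, since $K=\{x:x=\rho(x)\}$ is the fixed-point set of a continuous map, but the theorem already covers it.

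Each step is short. The only point needing care is using both defining properties of a retraction: $\rho$ restricted to $K$ is the identity, which gives one inclusion, and $\rho(H)\subseteq K$, which gives the other. If $\rho$ is not assumed to be continuous, we would have to take continuity of $q$ as a hypothesis instead. With $\rho$ continuous, as stated, nothing further is needed.
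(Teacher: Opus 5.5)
Your proposal is correct and follows essentially the paper's argument: the paper also observes that the zero set of $x\mapsto\norm{x-\rho(x)}$ is exactly $K$ and then applies Theorem~\ref{thm:closed convex cone-null}. You additionally spell out the nonnegativity and continuity of $q$ and verify both inclusions of the zero-set identity, which the paper leaves implicit.
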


\begin{proof}
The zero set of $x\mapsto\norm{x-\rho(x)}$ is exactly $K$.  The conclusion follows from Theorem~\ref{thm:closed convex cone-null}.
\end{proof}

If the function in Theorem~\ref{thm:closed convex cone-null} is taken to be the distance itself, one obtains an intrinsic characterization among closed sets.

\begin{theorem}\label{thm:distance-closed convex cone}
Let $K\subseteq H$ be a nonempty closed set.  Then $K$ is a closed convex cone if and only if $d(\cdot,K)$ is sublinear.
\end{theorem}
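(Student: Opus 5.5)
The forward direction is Corollary~\ref{cor:distance-sublinear}: if $K$ is a closed convex cone, then $d(\cdot,K)$ is sublinear. For the converse, I would invoke Theorem~\ref{thm:closed convex cone-null} with $q=d(\cdot,K)$. This function is nonnegative and $1$-Lipschitz, so it is continuous, and it is sublinear by hypothesis. Because $K$ is nonempty and closed, its zero set is exactly
\[
 \{x\in H:d(x,K)=0\}=\overline{K}=K .
\]
Theorem~\ref{thm:closed convex cone-null} then says that $K$ is a closed convex cone.

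The only delicate step is checking that each hypothesis of Theorem~\ref{thm:closed convex cone-null} really holds, and all of them do. Continuity comes from the standard estimate
\[
 |d(x,K)-d(y,K)|\le\norm{x-y},
\]
which needs no convexity. The zero set equals $K$ because $K$ is closed, and $K$ is nonempty by assumption, so $d$ is well defined and finite. Note that the existence of a nearest point in $K$, and hence the existence of $P_K$, is not needed here: we only use $d(x,K)$ as an infimum, since convexity of $K$ is not yet known. The definition in the preliminaries writes $d$ as a minimum, so in this proof I would read it as an infimum.

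As a sanity check, I would also verify directly that a closed cone can be recovered from sublinearity alone. Positive homogeneity with $\lambda=0$ gives $d(0,K)=0$, so $0\in K$. If $x\in K$ and $\lambda>0$, then
\[
 d(\lambda x,K)=\lambda\, d(x,K)=0 ,
\]
so $\lambda x\in K$. If $x,y\in K$, subadditivity gives
\[
 d(x+y,K)\le d(x,K)+d(y,K)=0 ,
\]
so $x+y\in K$. This is exactly the content of Theorem~\ref{thm:closed convex cone-null} specialized to the distance function, so no further obstacle is expected.
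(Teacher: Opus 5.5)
Your proof is correct and is the same as the paper's. The forward direction is Corollary~\ref{cor:distance-sublinear}; for the converse you apply Theorem~\ref{thm:closed convex cone-null} to the $1$-Lipschitz function $d(\cdot,K)$, whose zero set is $K$ because $K$ is closed. Your remark that $d(x,K)$ must be read as an infimum here, since nearest points need not exist for a nonconvex closed set in infinite dimensions, is a sensible precision that the paper leaves implicit.
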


\begin{proof}
The forward implication is Corollary~\ref{cor:distance-sublinear}.  Conversely, the distance function from any nonempty set is $1$-Lipschitz and hence continuous.  Since $K$ is closed,
\[
 K=\{x\in H:d(x,K)=0\}.
\]
Theorem~\ref{thm:closed convex cone-null} now applies.
\end{proof}

Adding the asymmetric definiteness condition to the distance detects pointedness as well.

\begin{theorem}\label{thm:distance-pointed closed convex cone}
Let $K\subseteq H$ be a nonempty closed set.  Then $K$ is a pointed closed convex cone if and only if $d(\cdot,K)$ is an asymmetric seminorm.
\end{theorem}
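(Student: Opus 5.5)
The plan is to combine Theorem~\ref{thm:distance-closed convex cone} with the zero-set characterization in Theorem~\ref{thm:pointed closed convex cone-null}. Throughout, we use one fact about the nonempty closed set $K$:
\[
 K=\{x\in H:d(x,K)=0\}.
\]

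For the forward implication, assume $K$ is a pointed closed convex cone. The first half of the proof of Theorem~\ref{thm:pointed closed convex cone-null} shows directly that $q=d(\cdot,K)$ is an asymmetric seminorm. Sublinearity comes from Corollary~\ref{cor:distance-sublinear}, and nonnegativity is clear. Asymmetric definiteness holds because $d(x,K)=d(-x,K)=0$ gives $x,-x\in K$ by closedness, so $x\in K\cap(-K)=\{0\}$.

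For the converse, assume $d(\cdot,K)$ is an asymmetric seminorm. In particular it is sublinear, so Theorem~\ref{thm:distance-closed convex cone} shows that $K$ is a closed convex cone. It remains to check pointedness. Take $x\in K\cap(-K)$. Then $d(x,K)=0$ and $d(-x,K)=0$, and asymmetric definiteness forces $x=0$.

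Alternatively, the converse follows from Theorem~\ref{thm:pointed closed convex cone-null}. The distance function is $1$-Lipschitz, hence continuous, so $d(\cdot,K)$ is a continuous asymmetric seminorm whose zero set is $K$.

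There is no real obstacle here. The only point needing care is the identification of the zero set of $d(\cdot,K)$ with $K$, and this is exactly where the hypothesis that $K$ is closed (and nonempty, so that $d(\cdot,K)$ is finite) is used.
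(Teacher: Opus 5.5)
Your proof is correct and matches the paper's argument: for the forward direction you use sublinearity of $d(\cdot,K)$ together with closedness and pointedness, and for the converse you use the sublinearity characterization of closed convex cones among nonempty closed sets, then apply asymmetric definiteness to $x\in K\cap(-K)$. Your alternative converse via the zero-set characterization of pointed closed convex cones, using continuity of $d(\cdot,K)$, is also valid.
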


\begin{proof}
If $K$ is a pointed closed convex cone, Theorem~\ref{thm:distance-closed convex cone} implies sublinearity.  If
\[
 d(x,K)=d(-x,K)=0,
\]
then $x,-x\in K$, and pointedness implies $x=0$.

Conversely, if $d(\cdot,K)$ is an asymmetric seminorm, Theorem~\ref{thm:distance-closed convex cone} shows that $K$ is a closed convex cone.  If $x\in K\cap(-K)$, then
\[
 d(x,K)=d(-x,K)=0,
\]
so $x=0$ by asymmetric definiteness.
\end{proof}

\section{Angular distance from a closed convex cone}

The ordinary distance $d(x,K)$ changes when $x$ is multiplied by a positive scalar, whereas its direction relative to a cone does not.  For questions concerned with direction, one can instead measure the smallest angle between the ray generated by $x$ and the cone.

\begin{definition}\label{def:angular-distance}
Let $K\subseteq H$ be a closed convex cone and let $x\in H\setminus\{0\}$.  The \emph{normalized angular distance} from $x$ to $K$ is
\[
\angdist(x,K)=
\begin{cases}
\displaystyle
\frac{2}{\pi}
\inf_{y\in K\setminus\{0\}}
\arccos\!\left(
\frac{\ip{x}{y}}{\norm{x}\norm{y}}
\right),
& x\notin K^\circ,\\[3ex]
1,&x\in K^\circ.
\end{cases}
\]
\end{definition}

Thus acute angles are normalized by the factor $2/\pi$, while directions making a nonacute angle with every vector of $K$ are assigned the value $1$.  Although the definition involves an infimum over the cone, the projection identifies the relevant direction and gives several equivalent formulas.

\begin{proposition}\label{prop:angular-formulas}
Let $K\subseteq H$ be a closed convex cone and $x\in H\setminus\{0\}$.  Then:
\begin{enumerate}[label=\textup{(\roman*)}]
\item
\[
\angdist(x,K)=
\begin{cases}
\displaystyle
\frac{2}{\pi}\arccos\!\left(
\frac{\ip{x}{P_Kx}}{\norm{x}\norm{P_Kx}}
\right),&x\notin K^\circ,\\[3ex]
1,&x\in K^\circ;
\end{cases}
\]
\item
\begin{equation}\label{eq:angle-arccos}
\angdist(x,K)=
\frac{2}{\pi}\arccos\!\left(\frac{\norm{P_Kx}}{\norm{x}}\right)
=
\frac{2}{\pi}\arccos\!\left(\frac{d(x,K^\circ)}{\norm{x}}\right);
\end{equation}
\item
\begin{equation}\label{eq:angle-arcsin}
\angdist(x,K)=
\frac{2}{\pi}\arcsin\!\left(\frac{d(x,K)}{\norm{x}}\right);
\end{equation}
\item if $x\notin K^\circ$, then
\begin{equation}\label{eq:angle-arctan}
\angdist(x,K)=
\frac{2}{\pi}\arctan\!\left(\frac{d(x,K)}{\norm{P_Kx}}\right)
=
\frac{2}{\pi}\arctan\!\left(\frac{d(x,K)}{d(x,K^\circ)}\right).
\end{equation}
With the convention $\arctan(+\infty)=\pi/2$, the same formula also covers $x\in K^\circ$;
\item $\angdist(x,K)=0$ if and only if $x\in K$, and $\angdist(x,K)=1$ if and only if $x\in K^\circ$;
\item
\[
 \angdist(x,K)+\angdist(x,K^\circ)=1.
\]
\end{enumerate}
\end{proposition}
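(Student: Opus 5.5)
The plan is to prove (i) first, since it identifies the optimal direction, and then derive (ii)–(vi) from (i) together with Moreau's decomposition and the Pythagorean identity $\norm{x}^2=\norm{P_Kx}^2+\norm{P_{K^\circ}x}^2$. Throughout I use \eqref{eq:distance-polar-projection}, namely $d(x,K)=\norm{P_{K^\circ}x}$. Applying Moreau to the cone $K^\circ$ and using bipolarity \eqref{bipolarity}, I also get $d(x,K^\circ)=\norm{P_{K^{\circ\circ}}x}=\norm{P_Kx}$. Finally, $x\in K^\circ$ iff $P_Kx=0$, which follows from Moreau's decomposition with $z=x=0+x$.

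For (i), suppose $x\notin K^\circ$, so $P_Kx\ne0$. Since $\arccos$ is decreasing, minimizing the angle amounts to maximizing $\ip{x}{y}/\norm{y}$ over $y\in K\setminus\{0\}$, that is, computing $\sup_{z\in K\cap B,\ \norm{z}=1}\ip{x}{z}$. By Remark~\ref{rem:support} we have $\ip{x}{z}\le\norm{P_Kx}$ for $z\in K\cap B$, with equality at $z=P_Kx/\norm{P_Kx}$. Hence the infimum of the arccos is attained at $y=P_Kx$. Moreover, Moreau orthogonality gives $\ip{x}{P_Kx}=\norm{P_Kx}^2$, so the quantity in (i) is $\arccos(\norm{P_Kx}/\norm{x})$. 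This proves (i) and the first equality of (ii) when $x\notin K^\circ$. When $x\in K^\circ$ we have $\norm{P_Kx}=0$, and $\frac{2}{\pi}\arccos 0=1$ matches the definition. The second equality in (ii) is the identity $d(x,K^\circ)=\norm{P_Kx}$.

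For (iii), write $c=\norm{P_Kx}/\norm{x}$ and $s=\norm{P_{K^\circ}x}/\norm{x}$. By Pythagoras $c,s\ge0$ and $c^2+s^2=1$, so $\arccos c=\arcsin s$ on $[0,1]$. Using $s=d(x,K)/\norm{x}$ gives (iii). For (iv), when $c>0$ the same angle $\theta\in[0,\pi/2)$ satisfies $\tan\theta=s/c$, which gives the formula in (iv). When $c=0$, the convention $\arctan(+\infty)=\pi/2$ gives $1$, consistent with the definition.

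For (v), (ii) shows $\angdist(x,K)=1$ iff $\norm{P_Kx}=0$ iff $x\in K^\circ$. Likewise, (iii) shows $\angdist(x,K)=0$ iff $d(x,K)=0$ iff $x\in K$, since $K$ is closed. For (vi), apply (iii) to the closed convex cone $K^\circ$: this gives $\angdist(x,K^\circ)=\frac{2}{\pi}\arcsin(d(x,K^\circ)/\norm{x})=\frac{2}{\pi}\arcsin c$. Then $\arccos c+\arcsin c=\pi/2$ yields the sum $1$.

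The only delicate point is in (i): the infimum over $K\setminus\{0\}$ is actually attained at $P_Kx$, which the support-function remark handles, and the boundary case $x\in K^\circ$ has to be checked against the convention in the definition.
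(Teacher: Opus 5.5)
Your proof is correct and follows essentially the paper's route. The key step is the inequality $\ip{x}{y}\le\ip{P_Kx}{y}\le\norm{P_Kx}\,\norm{y}$ for $y\in K$, with equality at $y=P_Kx$; you import it from Remark~\ref{rem:support}, whose proof is exactly this Moreau computation. Items (ii)--(v) then follow from the Pythagorean identity $\norm{x}^2=\norm{P_Kx}^2+d(x,K)^2$, as in the paper.

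The only real variation is in (vi). You combine (ii) for $K$ with (iii) for $K^\circ$ via $\arccos c+\arcsin c=\pi/2$. This is slightly cleaner than the paper's $\arctan a+\arctan(1/a)=\pi/2$ argument, because it needs no separate treatment of $x\in K\cup K^\circ$.
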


\begin{proof}
Assume first that $x\notin K^\circ$.  Then $P_Kx\ne0$.  Since $\arccos$ is decreasing, Definition~\ref{def:angular-distance} reduces the problem to maximizing
\[
 \frac{\ip{x}{y}}{\norm{x}\norm{y}}
 \qquad(y\in K\setminus\{0\}).
\]
For such $y$, Moreau's decomposition theorem and the definition of the polar closed convex cone give
\begin{align*}
\frac{\ip{x}{y}}{\norm{x}\norm{y}}
&=
\frac{\ip{P_Kx}{y}+\ip{P_{K^\circ}x}{y}}{\norm{x}\norm{y}}\\
&\le
\frac{\ip{P_Kx}{y}}{\norm{x}\norm{y}}\\
&\le
\frac{\norm{P_Kx}}{\norm{x}}.
\end{align*}
Equality is attained by every $y=\lambda P_Kx$ with $\lambda>0$.  This proves (i).  Moreover, Moreau orthogonality can be 
reformulated as,
\[
 \ip{x}{P_Kx}=\norm{P_Kx}^2,
\]
which implies the first equality in \eqref{eq:angle-arccos}.  The second one follows from
\[
 d(x,K^\circ)=\norm{P_Kx}.
\]

Moreau orthogonality also gives the Pythagorean identity
\begin{equation}\label{eq:pythagoras}
 \norm{x}^2
 =\norm{P_Kx}^2+\norm{P_{K^\circ}x}^2
 =\norm{P_Kx}^2+d(x,K)^2.
\end{equation}
Hence, equations \eqref{eq:angle-arcsin} and \eqref{eq:angle-arctan} follow from \eqref{eq:angle-arccos} and elementary trigonometry.  The same identities also show that $\angdist(x,K)\in[0,1]$.

For proving (v), bear in mind that equation \eqref{eq:pythagoras} shows that $\norm{P_Kx}=\norm{x}$ if and only if $x\in K$, and use
\eqref{eq:angle-arccos}, while \eqref{eq:angle-arccos} also shows that $\angdist(x,K)=1$ precisely when $\norm{P_Kx}=0$, 
equivalently $x\in K^\circ$.

If $x\in K\cup K^\circ$, (vi) follows from (v).  Otherwise, apply \eqref{eq:angle-arctan} to $K$ and $K^\circ$ and 
use
\[
 \arctan a+\arctan(1/a)=\frac{\pi}{2}
 \qquad(a>0).
\]
Finally. if $x\in K^\circ$, then items (i)-(iv) are either trivial or follow from (v).
\end{proof}

\begin{remark}
Geometrically, if $x\notin K^\circ$, then $\angdist(x,K)$ is the smallest angle between $x$ and a nonzero vector of $K$, 
normalized by the factor $2/\pi$. If $x\in K^\circ$, so that the angle between $x$ and every nonzero vector of $K$ is at
least $\pi/2$, we set $\angdist(x,K)=1$.
\end{remark}

Angular quantities associated with convex cones have previously been studied by Iusem and 
Seeger, in particular maximal and critical angles determined by pairs of vectors in a cone 
\cite{IusemSeeger2005,IusemSeeger2008,IusemSeeger2009}.
Our angular distance is of a different type: it measures the angular separation of an
arbitrary vector from the cone.

Next, we note a simple property of the ordinary distance: moving from $x$ in a direction belonging to $K$ cannot increase 
the distance to $K$. No subduality assumption is needed for this property to hold.

\begin{lemma}\label{lem:distance-antitone}
Let $K\subseteq H$ be a closed convex cone.  Then $d(\cdot,K)$ is $K$-antitone.
\end{lemma}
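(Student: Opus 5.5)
The plan is a direct comparison of admissible points in the minimization defining the distance. Let $x\le_K y$, so that $k:=y-x\in K$. I want to show $d(y,K)\le d(x,K)$. Since $K$ is a closed convex cone, it is closed under addition: $P_Kx+k\in K$ because both summands lie in $K$. This point is a candidate in the minimization defining $d(y,K)$, so
\[
 d(y,K)\le \norm{y-(P_Kx+k)}=\norm{x-P_Kx}=d(x,K).
\]
That is exactly $K$-antitonicity.

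An alternative route goes through the sublinearity already established. By Corollary~\ref{cor:distance-sublinear} we have $d(y,K)=d(x+k,K)\le d(x,K)+d(k,K)$, and $d(k,K)=0$ because $k\in K$. This gives the same conclusion.

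Neither route presents a real obstacle. The only point to check is that the argument uses nothing beyond $K+K\subseteq K$, which holds by the definition of a closed convex cone. In particular, no subduality or pointedness is needed, which matches the remark preceding the lemma.

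I would present the first argument because it is self-contained. I would then note that it is also an immediate consequence of the subadditivity in Corollary~\ref{cor:distance-sublinear}.
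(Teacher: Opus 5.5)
Your proof is correct, and your second route is exactly the paper's proof: it writes $y=(y-x)+x$, applies the subadditivity from Corollary~\ref{cor:distance-sublinear}, and uses $d(y-x,K)=0$.

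Your first argument is a genuinely different, more elementary route. It does not use the sublinearity result, which in the paper rests on Moreau's decomposition and Lemma~\ref{lem:dual-isotone} applied to $K^\circ$. It needs only the translation property $z+k\in K$ for $z,k\in K$. If you replace $P_Kx$ by an arbitrary $z\in K$ and take the infimum, so that $d(y,K)\le\inf_{z\in K}\norm{y-(z+k)}=d(x,K)$, the argument shows more. The conclusion then holds for every nonempty set $K$ with $K+K\subseteq K$, without closedness, convexity, homogeneity, or the existence of a projection.

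The paper's version trades that generality for brevity. It is a one-line consequence of a result already proved, and it fits the paper's theme of deriving everything from the sublinearity of distance functions.
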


\begin{proof}
If $x\le_Ky$, then $y-x\in K$.  By Corollary~\ref{cor:distance-sublinear},
\[
 d(y,K)
 =d((y-x)+x,K)
 \le d(y-x,K)+d(x,K)
 =d(x,K).
\]
\end{proof}

By the preceding lemma, Theorem~\ref{thm:subdual-projection-norm} now provides a corresponding characterization in terms of
the angular distance.  

\begin{theorem}\label{thm:angular-subdual}
Let $K\subseteq H$ be a pointed closed convex cone.  Then $K$ is subdual if and only if $\angdist(\cdot,K)$ is $K$-antitone on $H\setminus\{0\}$.
\end{theorem}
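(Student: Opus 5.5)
The plan is to work throughout with the arctangent formula \eqref{eq:angle-arctan} from Proposition~\ref{prop:angular-formulas}:
\[
\angdist(x,K)=\frac{2}{\pi}\arctan\!\left(\frac{d(x,K)}{\norm{P_Kx}}\right),
\]
with the convention $\arctan(+\infty)=\pi/2$ when $P_Kx=0$, i.e.\ when $x\in K^\circ$. Since $\arctan$ is increasing on $[0,+\infty]$, $K$-antitonicity of $\angdist(\cdot,K)$ reduces to $K$-antitonicity of the quotient $d(\cdot,K)/\norm{P_K\cdot}$. The two ingredients for this are Lemma~\ref{lem:distance-antitone}, which controls the numerator, and Theorem~\ref{thm:subdual-projection-norm}, which controls the denominator.

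\emph{Sufficiency.} Assume $K$ is subdual and let $x,y\in H\setminus\{0\}$ with $x\le_K y$. Lemma~\ref{lem:distance-antitone} gives $d(y,K)\le d(x,K)$, and Theorem~\ref{thm:subdual-projection-norm} gives $\norm{P_Kx}\le\norm{P_Ky}$.
\begin{itemize}
\item If $P_Kx=0$, then $\angdist(x,K)=1\ge\angdist(y,K)$, since $\angdist$ takes values in $[0,1]$.
\item If $P_Kx\ne0$, then $\norm{P_Ky}\ge\norm{P_Kx}>0$, so
\[
\frac{d(y,K)}{\norm{P_Ky}}\le\frac{d(x,K)}{\norm{P_Kx}},
\]
and monotonicity of $\arctan$ yields $\angdist(y,K)\le\angdist(x,K)$.
\end{itemize}
This direction is routine.

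\emph{Necessity.} Here the device used in Theorem~\ref{thm:subdual-projection-norm}, namely comparing $-x$ with $0$, is unavailable, because $0$ is excluded from the domain. I would argue by contraposition. Suppose $K\not\subseteq K^*$ and pick $x\in K$ with $x\notin K^*$, so $x\ne0$. Then $-x\notin K^\circ$, hence $p:=P_K(-x)\ne0$, and by Proposition~\ref{prop:angular-formulas}(v) we get $\angdist(-x,K)<1$.

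Now set $y:=-x+p$. By Moreau's decomposition, $y=P_{K^\circ}(-x)\in K^\circ$. Since $y-(-x)=p\in K$, we have $-x\le_K y$. It remains to check that $y\ne0$. If $y=0$, then $-x=p\in K$, so $x\in K\cap(-K)=\{0\}$ by pointedness, contradicting $x\ne0$. Hence $y\in K^\circ\setminus\{0\}$, and Proposition~\ref{prop:angular-formulas}(v) gives $\angdist(y,K)=1>\angdist(-x,K)$. This contradicts $K$-antitonicity on $H\setminus\{0\}$, so $K\subseteq K^*$.

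The main obstacle is this necessity step: finding a comparison pair that avoids the origin. The choice of $y=P_{K^\circ}(-x)$ as the larger point resolves it, and pointedness is used exactly to guarantee that $y\ne0$.
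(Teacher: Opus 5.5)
Your sufficiency argument is correct, and it treats the $K^\circ$ cases more carefully than a bare appeal to Proposition~\ref{prop:angular-formulas}(v): $P_Kx\ne0$ forces $P_Ky\ne0$.

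The necessity argument, however, fails at the choice of $y$. Moreau's decomposition of $-x$ reads $-x=p+P_{K^\circ}(-x)$, so $P_{K^\circ}(-x)=-x-p$, not $-x+p$. Your $y=-x+p$ does satisfy $-x\le_K y$. But $\ip{y}{p}=\ip{-x}{p}+\norm{p}^2=2\norm{p}^2>0$ with $p\in K$, so $y\notin K^\circ$, hence $\angdist(y,K)<1$ and no contradiction arises. The correct point $y'=-x-p$ does lie in $K^\circ$, but $-x-y'=p\in K$, i.e.\ $y'\le_K -x$. Antitonicity then only demands $\angdist(-x,K)\le\angdist(y',K)=1$, which is automatic.

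In fact, keeping $-x$ as the lower point cannot work in general. Take $K=\{r(\cos\theta,\sin\theta):r\ge0,\ 0\le\theta\le2\pi/3\}$ in $\R^2$, so $K^\circ=\{r(\cos\theta,\sin\theta):r\ge0,\ 7\pi/6\le\theta\le3\pi/2\}$, and take $x=(1,0)\in K\setminus K^*$. Every $y$ with $y+x\in K$ has $y_2\ge0$, while $K^\circ\setminus\{0\}$ lies in the open lower half-plane. So no nonzero element of $K^\circ$ lies above $-x$.

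The missing idea is to fix the upper point in $K^\circ\setminus\{0\}$ first, and then push the lower point far in the direction $-x$.
\begin{itemize}
\item Choose $v\in K$ with $\ip{x}{v}<0$; it exists because $x\notin K^*$.
\item Choose any $y\in K^\circ\setminus\{0\}$. The corrected $y=P_{K^\circ}(-x)=-x-p$ works, and your pointedness argument shows exactly that it is nonzero. The paper instead obtains such a $y$ from $K\ne H$ and bipolarity.
\item Set $w=y-tx$ with $t>0$ so large that $\ip{w}{v}=\ip{y}{v}-t\ip{x}{v}>0$.
\end{itemize}
Then $w\notin K^\circ$, so in particular $w\ne0$. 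Also $y-w=tx\in K$, i.e.\ $w\le_K y$. Finally $\angdist(w,K)<1=\angdist(y,K)$, which contradicts antitonicity. This is precisely the paper's construction.
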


\begin{proof}
Suppose first that $K$ is subdual and let $x,y\ne0$ satisfy $x\le_Ky$.  By Theorem~\ref{thm:subdual-projection-norm},
\[
 \norm{P_Kx}\le\norm{P_Ky},
\]
while Lemma~\ref{lem:distance-antitone} gives
\[
 d(y,K)\le d(x,K).
\]
If neither $x$ nor $y$ lies in $K^\circ$, equation \eqref{eq:angle-arctan} and monotonicity of $\arctan$ therefore imply
\[
 \angdist(y,K)\le\angdist(x,K).
\]
The cases in which one of the points lies in $K^\circ$ follow from Proposition~\ref{prop:angular-formulas}(v).

Conversely, suppose that $\angdist(\cdot,K)$ is $K$-antitone and assume, for contradiction, that $K$ is not subdual.  Then there exist $u,v\in K$ such that
\[
 \ip{u}{v}<0.
\]
Since $K$ is a closed convex cone and $K\ne H$ (because it is pointed), the bipolarity equation \eqref{bipolarity} implies 
that its polar $K^\circ$ contains a nonzero vector; choose $0\ne y\in K^\circ$.  For sufficiently large $t>0$, the vector
\[
 x=y-tu
\]
does not belong to $K^\circ$, because
\[
 \ip{x}{v}=\ip{y}{v}-t\ip{u}{v}>0.
\]
Moreover, $y-x=tu\in K$, so $x\le_Ky$.  Proposition~\ref{prop:angular-formulas}(v) gives
\[
 \angdist(y,K)=1
 \qquad\text{and}\qquad
 \angdist(x,K)<1,
\]
contradicting $K$-antitonicity.  Hence $K$ is subdual.
\end{proof}

\section{Aggregating distances from several closed convex cones}

The previous sections associate a nonnegative sublinear distance function with each closed convex cone.  If several cones are present, we ask when their distances can be combined without losing subadditivity.  Since the relevant inequalities hold coordinate by coordinate, the norm used to combine them should respect coordinatewise order.

\begin{definition}
A norm $\zeta$ on $\R^n$ is called \emph{coordinatewise monotone on $\Rp^n$} if
\[
 0\le a_i\le b_i\quad(i=1,\ldots,n)
 \quad\Longrightarrow\quad
 \zeta(a_1,\ldots,a_n)\le\zeta(b_1,\ldots,b_n).
\]
\end{definition}

For $1\le p<\infty$, the $\ell_p$-norm on $\mathbb{R}^n$ is defined by
\[
\|x\|_p
=
\left(\sum_{i=1}^n |x_i|^p\right)^{1/p},
\]
while
\[
\|x\|_\infty
=
\max_{1\le i\le n}|x_i|.
\]

More generally, let $w_1,\ldots,w_n>0$. Then, the weighted $\ell_p$-norm is defined by
\[
\|x\|_{p,w}
=
\left(\sum_{i=1}^n w_i|x_i|^p\right)^{1/p},
\qquad 1\le p<\infty,
\]
while 
\[
\|x\|_\infty
=
\max_{1\le i\le n}w_i|x_i|.
\]

All these norms are coordinatewise monotone on $\mathbb{R}_+^n$.

The coordinatewise monotonicity condition is exactly what is needed to pass from the componentwise inequalities for the 
distances to an inequality for their combination.

\begin{theorem}\label{thm:aggregate}
Let $K_1,\ldots,K_n$ be closed convex cones in $H$ such that
\[
 K_1\cap\cdots\cap K_n=\{0\},
\]
and let $\zeta$ be a coordinatewise monotone norm on $\R^n$.  Define
\begin{equation}\label{eq:eta}
 \eta(x)
 =\zeta\bigl(d(x,K_1),\ldots,d(x,K_n)\bigr)
 =\zeta\bigl(\norm{P_{K_1^\circ}x},\ldots,
              \norm{P_{K_n^\circ}x}\bigr).
\end{equation}
Then $\eta$ is sublinear, and
\[
 \eta(x)=0\quad\Longleftrightarrow\quad x=0.
\]
In particular, $\eta$ is an asymmetric seminorm.
\end{theorem}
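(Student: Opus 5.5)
The proof splits into three steps: positive homogeneity, subadditivity, and the zero set of $\eta$. The second equality in \eqref{eq:eta} is just \eqref{eq:distance-polar-projection} applied to each $K_i$.

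\emph{Positive homogeneity.} Let $\lambda\ge0$. Corollary~\ref{cor:distance-sublinear} gives $d(\lambda x,K_i)=\lambda d(x,K_i)$ for each $i$. Absolute homogeneity of the norm $\zeta$ then yields
\[
\eta(\lambda x)=\zeta\bigl(\lambda d(x,K_1),\ldots,\lambda d(x,K_n)\bigr)=\lambda\eta(x).
\]

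\emph{Subadditivity.} Fix $x,y\in H$ and put $a_i=d(x+y,K_i)$, $b_i=d(x,K_i)+d(y,K_i)$. By Corollary~\ref{cor:distance-sublinear}, $0\le a_i\le b_i$ for every $i$. Coordinatewise monotonicity of $\zeta$ on $\Rp^n$ gives $\eta(x+y)=\zeta(a)\le\zeta(b)$. The triangle inequality for $\zeta$, applied to the vectors $(d(x,K_i))_i$ and $(d(y,K_i))_i$, then gives $\zeta(b)\le\eta(x)+\eta(y)$. This combination of monotonicity with the triangle inequality is the only place where the hypothesis on $\zeta$ is used. It is the only step needing real care: the monotonicity is applied only to vectors with nonnegative entries, which is legitimate because distances are nonnegative.

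\emph{Zero set.} Clearly $\eta\ge0$, and $\eta(0)=0$ since $0\in K_i$ for all $i$. Conversely, suppose $\eta(x)=0$. Since $\zeta$ is a norm, every coordinate vanishes, so $d(x,K_i)=0$ for each $i$. Each $K_i$ is closed, so $x\in K_i$ for all $i$, and $x\in K_1\cap\cdots\cap K_n=\{0\}$. Hence $\eta(x)=0$ if and only if $x=0$.

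It follows that $\eta:H\to\Rp$ is sublinear and vanishes only at the origin. In particular, $\eta(x)=\eta(-x)=0$ forces $x=0$, so $\eta$ is an asymmetric seminorm in the sense of the Preliminaries.
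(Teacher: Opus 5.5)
Your proposal is correct and follows essentially the same route as the paper: homogeneity from that of each $d(\cdot,K_i)$ and of $\zeta$, subadditivity by coordinatewise monotonicity followed by the triangle inequality for $\zeta$, and the zero set via definiteness of $\zeta$, closedness of the $K_i$, and $K_1\cap\cdots\cap K_n=\{0\}$. You state a few points the paper leaves implicit, namely the nonnegativity needed to apply monotonicity on $\Rp^n$ and the closedness used in passing from $d(x,K_i)=0$ to $x\in K_i$.
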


\begin{proof}
Positive homogeneity follows from positive homogeneity of each distance function and of the norm $\zeta$.  By Corollary~\ref{cor:distance-sublinear},
\[
 d(x+y,K_i)\le d(x,K_i)+d(y,K_i)
 \qquad(i=1,\ldots,n).
\]
From the coordinatewise monotonicity of $\zeta$, followed by its triangle inequality, we obtain
\begin{align*}
 \eta(x+y)
 &\le
 \zeta\bigl(d(x,K_1)+d(y,K_1),\ldots,
             d(x,K_n)+d(y,K_n)\bigr)\\
 &\le \eta(x)+\eta(y).
\end{align*}
If $\eta(x)=0$, then every coordinate in \eqref{eq:eta} is zero, so
\[
 x\in K_1\cap\cdots\cap K_n=\{0\}.
\]
The converse is immediate.
\end{proof}

\begin{remark}\label{rem:arbitrary-zeta}
The coordinatewise monotonicity of $\zeta$ cannot simply be omitted.  From the inequalities
\[
 d(x+y,K_i)\le d(x,K_i)+d(y,K_i)
\]
one only obtains a componentwise comparison in $\Rp^n$; an arbitrary norm on $\R^n$ need not preserve that comparison.  Likewise, symmetry of $\zeta$ as a norm on $\R^n$ does not by itself imply $\eta(-x)=\eta(x)$, because the individual distance functions are generally asymmetric with respect to $x\mapsto -x$.
\end{remark}

The construction is generally asymmetric.  If the family of cones is symmetric under multiplication by $-1$, however, the resulting function becomes symmetric as well.

\begin{corollary}\label{cor:aggregate-symmetric}
Under the assumptions of Theorem~\ref{thm:aggregate}, suppose in addition that there is a permutation $\pi$ of $\{1,\ldots,n\}$ such that
\[
 -K_i=K_{\pi(i)}\qquad(i=1,\ldots,n),
\]
and that $\zeta$ is invariant under the corresponding permutation of coordinates.  Then $\eta$ is a norm on $H$.
\end{corollary}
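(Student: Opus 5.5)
By Theorem~\ref{thm:aggregate}, $\eta$ is already sublinear, nonnegative, and satisfies $\eta(x)=0$ only for $x=0$. By the paper's definition, an asymmetric seminorm is a norm exactly when it is symmetric. So it only remains to show $\eta(-x)=\eta(x)$ for every $x\in H$.

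The key identity is $d(-x,K)=d(x,-K)$ for any closed convex cone $K$. This holds because $y\mapsto -y$ is an isometric bijection of $K$ onto $-K$:
\[
 d(-x,K)=\inf_{y\in K}\norm{-x-y}=\inf_{y\in K}\norm{x-(-y)}=\inf_{z\in -K}\norm{x-z}=d(x,-K).
\]
Applying this with $K=K_i$ and the hypothesis $-K_i=K_{\pi(i)}$ gives
\[
 d(-x,K_i)=d(x,K_{\pi(i)})\qquad(i=1,\ldots,n).
\]

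Therefore
\[
 \eta(-x)=\zeta\bigl(d(x,K_{\pi(1)}),\ldots,d(x,K_{\pi(n)})\bigr).
\]
The vector on the right is obtained from $\bigl(d(x,K_1),\ldots,d(x,K_n)\bigr)$ by permuting coordinates according to $\pi$. The assumed invariance of $\zeta$ under this permutation then yields $\eta(-x)=\eta(x)$.

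The only point needing care is the direction of the permutation. "Invariant under the corresponding permutation" should be read as $\zeta(a_{\pi(1)},\ldots,a_{\pi(n)})=\zeta(a_1,\ldots,a_n)$ for all $a\in\R^n$. If the intended convention uses $\pi^{-1}$, this is harmless: invariance under $\pi$ is equivalent to invariance under $\pi^{-1}$, since the invariant permutations form a group.

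Once symmetry is established, nothing else needs checking. Sublinearity and definiteness are inherited from Theorem~\ref{thm:aggregate}, and the triangle inequality together with symmetry and positive homogeneity give $\eta(\lambda x)=|\lambda|\eta(x)$ for all real $\lambda$. Hence $\eta$ is a norm on $H$.
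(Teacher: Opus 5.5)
Your proof is correct and follows the paper's argument: the identity $d(-x,K_i)=d(x,-K_i)=d(x,K_{\pi(i)})$ makes the coordinate vector of $\eta(-x)$ a permutation of that of $\eta(x)$, so permutation invariance of $\zeta$ gives symmetry, and Theorem~\ref{thm:aggregate} supplies the remaining norm axioms. Your remark on the direction of the permutation is correct (invariance under $\pi$ is equivalent to invariance under $\pi^{-1}$) and clarifies a point the paper leaves implicit.
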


\begin{proof}
For every $i$,
\[
 d(-x,K_i)=d(x,-K_i)=d(x,K_{\pi(i)}).
\]
Thus the coordinate vector defining $\eta(-x)$ is a permutation of the coordinate vector defining $\eta(x)$.  Permutation invariance of $\zeta$ gives $\eta(-x)=\eta(x)$, and Theorem~\ref{thm:aggregate} supplies the remaining norm axioms.
\end{proof}

Note that any $\ell_p$-norm with $1\le p\le\infty$ is invariant under any permutation of coordinates. The simplest instance 
of cones in Corollary \ref{cor:aggregate-symmetric} is obtained by pairing a cone with its negative.

\begin{example}
Let $H=\mathbb{R}^2$ and consider the four closed convex cones

$$
K_1=\{(t,0):t\ge0\},\qquad
K_2=\{(-t,0):t\ge0\},
$$

$$
K_3=\{(0,t):t\ge0\},\qquad
K_4=\{(0,-t):t\ge0\}.
$$

Then

$$
K_1\cap K_2\cap K_3\cap K_4=\{0\},
$$

and

$$
-K_1=K_2,\qquad -K_2=K_1,\qquad
-K_3=K_4,\qquad -K_4=K_3.
$$

Thus the family of cones is invariant under multiplication by $-1$ up to the permutation

$$
\pi=(1\ 2)(3\ 4).
$$

If $\zeta$ is a coordinatewise monotone norm on $\mathbb{R}^4$ which is invariant under this permutation, then Corollary~\ref{cor:aggregate-symmetric} implies that

$$
\eta(x)
=
\zeta\bigl(
d(x,K_1),d(x,K_2),d(x,K_3),d(x,K_4)
\bigr)
$$

is a norm on $\mathbb{R}^2$.

For example, take $\zeta=\|\cdot\|_2$ the Euclidean norm. If $x=(x_1,x_2)$, then

$$
d(x,K_1)^2+d(x,K_2)^2=x_1^2+2x_2^2,
$$

and

$$
d(x,K_3)^2+d(x,K_4)^2=2x_1^2+x_2^2.
$$

Hence

$$
\eta(x)^2
=
d(x,K_1)^2+d(x,K_2)^2+d(x,K_3)^2+d(x,K_4)^2
=
3(x_1^2+x_2^2),
$$

so

$$
\eta(x)=\sqrt{3}\,\|x\|_2.
$$

\end{example}

\begin{example}
Let $K$ be a pointed closed convex cone and let $\zeta$ be a coordinatewise monotone norm on $\R^2$ satisfying $
\zeta(a,b)=\zeta(b,a)$ (for example any $\ell_p$-norm on $\R^2$). Then,
\[
 \eta(x)=\zeta\bigl(d(x,K),d(x,-K)\bigr)
\]
is a norm.  Indeed, $K\cap(-K)=\{0\}$ and the two closed convex cones are interchanged by multiplication by $-1$.
\end{example}

A different example is obtained from a cone and its polar.  Here no symmetry assumption is needed, because Moreau orthogonality gives the Hilbert norm directly.

\begin{example}
For every closed convex cone $K$, the Euclidean aggregation of the distances to $K$ and $K^\circ$ gives back the Hilbert norm:
\[
 \sqrt{d(x,K)^2+d(x,K^\circ)^2}
 =\sqrt{\norm{P_{K^\circ}x}^2+\norm{P_Kx}^2}
 =\norm{x},
\]
by Moreau orthogonality.
\end{example}

\section{Concluding remarks}

In this note, we studied scalar functions associated with the metric projection onto a closed convex cone and their monotonicity with respect to
the order induced by the cone. For a pointed closed convex cone $K$, we proved that $K$ is 
subdual if and only if the norm of the projection $\norm{P_K}$ is $K$-isotone. This contrasts
with the $K$-isotonicity of the metric projection $P_K$ itself, which is a substantially
stronger property and is closely related to lattice-type properties of the cone.

Moreau's decomposition relates the distance from a cone to the norm of the projection onto 
its polar cone. Using this relation, we proved that a nonempty closed set $K$ is a closed 
convex cone if and only if $d(\cdot,K)$ is positively homogeneous and subadditive, and that
$K$ is a pointed closed convex cone if and only if $d(\cdot,K)$ is an asymmetric seminorm.
We also introduced a normalized angular distance and proved that, for a pointed closed 
convex cone, subduality is equivalent to the $K$-antitonicity of this angular distance on 
$H\setminus\{0\}$.

The identity expressing the norm of a projection as the support function of $K\cap B$ 
relates the
first characterization of subduality to support functions, while the results for the distance
function connect naturally with half-norms and asymmetric normed spaces. If $K_1,\ldots,K_n$ 
are closed convex cones with trivial intersection and $\zeta$ is a coordinatewise monotone
norm, then applying $\zeta$ to the distances from these cones yields an asymmetric seminorm.
If in addition reflection with respect to the origin permutes the cones and $\zeta$ is
invariant under the corresponding permutation of coordinates, the resulting function is a 
norm.

Several questions remain open. One may ask which other scalar functions associated with 
$P_Kx$ have monotonicity properties that are equivalent to geometric properties of $K$, and
which nonlinear functions of several cone distances preserve positive homogeneity and
subadditivity.

The $K$-antitonicity of the normalized angular distance for subdual cones, together with its
invariance under positive scaling, also raises the question of whether an appropriate order 
can be introduced on the unit sphere for which a related monotonicity property holds. A 
difficulty is that $x\le_K y$ does not in general imply a corresponding order relation
between $x/\norm{x}$ and $y/\norm{y}$.

\section*{Declaration of use of AI} 

ChatGPT Plus was used to assist with exposition, including plain-language explanations and simple 
illustrative examples, and to identify minor mathematical corrections. The mathematical 
ideas, argument, proof, and conclusions are my own; I verified and incorporated the suggested 
corrections.

\end{document}